\theoremstyle{definition}\newtheorem{definition}{Definition}[section]
\theoremstyle{plain}\newtheorem{theorem}[definition]{Theorem}
\theoremstyle{plain}\newtheorem{proposition}[definition]{Proposition}
\theoremstyle{plain}
\theoremstyle{plain}
\theoremstyle{definition}\newtheorem{assumption}[definition]{Assumption}
\theoremstyle{definition}\newtheorem{example}[definition]{Example}
\theoremstyle{definition}\newtheorem{remark}[definition]{Remark}
\newcommand{\range}{\mathcal{R}}
\newcommand{\N}{\mathbb{N}}
\newcommand{\cO}{\mathcal{O}}
\newcommand{\R}{\mathbb{R}}
\newcommand{\1}{{\ell^1(\N)}}
\newcommand{\2}{{\ell^2(\N)}}
\newcommand{\3}{{\ell^\infty(\N)}}
\newcommand{\diff}{\mathrm{d}\,}
\newcommand{\xad}{x_\alpha^\delta}
\newcommand{\xdag}{x^\dagger}
\newcommand{\yd}{y^\delta}
\newcommand{\la}{\langle}
\newcommand{\ra}{\rangle}
\begin{document}

\date{\today}

\title{Convergence rates in $\ell^1$-regularization when the basis is not smooth enough}

\author{
{\sc Jens Flemming}\thanks{Department of Mathematics, Technische Universit\"at Chemnitz, 09107 Chemnitz, Germany.}\,,
{\sc Markus Hegland}\thanks{Centre for Mathematics and its Applications, The Australian National University, Canberra ACT, 0200, Australia.}}

\maketitle

\begin{abstract}
Sparsity promoting regularization is an important technique for signal reconstruction and several other
ill-posed problems. Theoretical investigation typically bases on the assumption that the unknown solution
has a sparse representation with respect to a fixed basis.
We drop this sparsity assumption and provide error estimates for non-sparse solutions.
After discussing a result in this direction published earlier by one of the authors and coauthors we
prove a similar error estimate under weaker assumptions.
Two examples illustrate that this set of weaker assumptions indeed covers additional situations
which appear in applications.
\end{abstract}

\noindent\textbf{MSC2010 subject classification:} 65J20, 47A52, 49N45

\medskip

\noindent\textbf{Keywords:}
Linear ill-posed problems,  Tikhonov-type regularization, $\ell^1$-regularization, non-smooth basis,
sparsity constraints, convergence rates, variational inequalities.

\section{Introduction}\label{s1}

Variational approaches
\begin{equation}\label{eq:tikh2}
\frac{1}{p}\|Ax-y^\delta\|_Y^p+\alpha\|x\|_\1\to\min_{x\in\1},\qquad 1 \le p < \infty,\quad \alpha>0,
\end{equation}
have become a standard tool for solving ill-posed operator equations,
\begin{equation}\label{eq:Axy}
Ax=y,
\end{equation}
for a bounded linear operator $A:X:=\1 \to Y$ mapping absolutely summable infinite sequences $x=(x_1,x_2,...)$ of real numbers $x_k,\;k \in \N$,  into a Banach space $Y$, if the solutions are known to be \emph{sparse} or if the \emph{sparsity constraints are narrowly missed}. This means that either only a finite number of nonzero components $x_k$ occurs or that the remaining nonzero components are negligibly small for large $k$. 
We assume that the exact right-hand side $y$ is in the range $\range(A):=\{Ax:\;x \in \1\}$ of $A$, which is a nonclosed subset of $Y$ due to the ill-posedness of equation (\ref{eq:Axy}), and that $y$ is not directly accessible. Instead one only has some measured noisy version $\yd\in Y$ at hand with a deterministic noise model $\|y-\yd\|_Y\leq\delta$ using the given noise level $\delta\geq 0$.  
Moreover, we assume that $\xdag \in \1$ denotes a solution of (\ref{eq:Axy}). In particular, let us suppose weak convergence
\begin{equation} \label{eq:weak}
Ae^{(k)} \rightharpoonup 0 \quad \mbox{in} \quad Y \quad \mbox{as}  \quad k \to \infty,
\end{equation}
where $e^{(k)}=(0,0,...,0,1,0,...)$
denotes the infinite unit sequence with $1$ at the $k$-th position and $0$ else. 

For successful application of $\ell^1$-regularization existence of minimizers $\xad$ to \eqref{eq:tikh2} 
and their stability with respect to perturbations in the data $\yd$ have to be ensured.
Further, by choosing the regularization parameter $\alpha>0$ in dependence on the noise level $\delta$ and
the given data $y^\delta$ one has to guarantee that corresponding minimizers converge to
a solution $\xdag$ of \eqref{eq:Axy} if the noise level goes to zero.
Such existence, stability, and convergence results can be found in the literature.
Also the verification of convergence rates has been addressed, but mostly in the case of sparse solutions (cf.~\cite{AR2,BreLor09,Daub03,Grasm09,Gras11,GrasHaltSch08,GrasHaltSch11,Lorenz08,Ramlau08,RamRes10,RamTe10,Scherzetal09}).  For non-sparse solutions a first convergence rate result can be found in \cite{BFH13}.
The aim of the present article is to formulate convergence rates results under assumptions which are weaker than those in \cite{BFH13} and
to obtain in this context further insights into the structure of $\ell^1$-regularization.

By the way we should mention that under condition (\ref{eq:weak}) in \cite{BFH13} the weak$^*$-to-weak continuity of $A$ was shown by employing the space $c_0$ of infinite sequences tending to zero, which is a predual space of $\1$, i.e., $(c_0)^*=\1$. Furthermore, we have for the range $\range(A^*)$ of the adjoint operator $A^*: Y^* \to \3$ that $\range(A^*) \subseteq c_0$ (cf.~\cite[Proposition~2.4 and Lemma~2.7]{BFH13}). These
facts, which are essentially based on (\ref{eq:weak}), ensure existence of regularized solutions $\xad$ for all $\alpha>0$ and norm convergence $\|\xad-\xdag\|_{\1} \to 0$ as $\delta \to 0$ if the
regularization parameter $\alpha=\alpha(\delta,\yd)$ is chosen in an appropriate manner, for example according to the sequential discrepancy principle (cf.~\cite{AnzHofMath12,HofMat12}). A sufficient condition to derive
(\ref{eq:weak}) is the existence of an extension of $A$ to $\2$ such that $A:\2 \to Y$ is a bounded linear operator. Then, taking into account the continuity of the embedding from $\1$ to $\2$, condition (\ref{eq:weak}) directly follows from the facts that $\{e^{(k)}\}_{k \in \N}$ is an orthogonal basis in
  $\2$ with $e^{(k)} \rightharpoonup 0$ in $\2$  as $k \to \infty$ and that every bounded linear operator is weak-to-weak continuous.  

\section{Convergence rates for smooth bases and a counter example} \label{s2}

As important ingredient and crucial condition for proving convergence rates the authors of \cite{BFH13} assumed that the following assumption holds true.

\begin{assumption}\label{as:range}
For all $k \in \N$ there exist $f^{(k)} \in Y^*$ such that
\begin{equation} \label{eq:(c)}
e^{(k)}=A^*f^{(k)}, \qquad k=1,2, ... \;.
\end{equation}
\end{assumption}

\begin{remark} \label{rem:injective}
The countable set of \emph{range conditions} (\ref{eq:(c)}) concerning the unit elements $e^{(k)}$, which form a \emph{Schauder basis} in all Banach spaces $\ell^q(\N),\;1 \le q<\infty,$ with their usual norms as well as in $c_0$ with the supremum norm,
can by using duality pairings $\langle \cdot,\cdot \rangle_{\3 \times \1}$ be equivalently rewritten as
\begin{equation} \label{eq:cont}
|\langle e^{(k)},x \rangle_{\3 \times \1}| \le C_k  \|Ax\|_Y, \qquad k=1,2,...\;, 
\end{equation} 
where, for fixed $k \in \N$, (\ref{eq:cont}) must hold for some constant $C_k>0$ and all $x \in \1$
(cf.~\cite[Lemma~8.21]{Scherzetal09}). Since we have $|x_k|=|\langle e^{(k)},x \rangle_{\3 \times \1}|$, for all $k \in \N$, Assumption~\ref{as:range} implies that $A:\1 \to Y$ is an \emph{injective} operator. 
Moreover, it can be easily verified that the following Assumption~\ref{as:smooth} is equivalent to Assumption~\ref{as:range}.
\end{remark}

\begin{assumption}\label{as:smooth}
For all $k \in \N$ there exist $f^{(k)} \in Y^*$ such that, for all $j \in \N$,
\begin{equation} \label{eq:(cH)}
\langle f^{(k)},Ae^{(j)}\rangle_{Y^* \times Y} =  \left\{\begin{array}{lcl} 1 & \mbox{if} & k=j\\0 & \mbox{if}& k \not=j \end{array}\right..
\end{equation}
\end{assumption}

The next proposition shows that the requirement (\ref{eq:(cH)}) cannot hold if one of the elements $Ae^{(j)}$ equals the sum of a convergent series $\sum _{k \in \N,\,k \not=j}\lambda_k e^{(k)}$.

\begin{proposition} \label{pro:not}
From an equation
\begin{equation}\label{eq:lambda}
\sum \limits _{k \in \N} \lambda_k Ae^{(k)}=0, \quad \mbox{where} \quad\lambda_k \in \R,\quad (\lambda_1,\lambda_2,\ldots)\neq 0,
\end{equation}
it follows that condition (\ref{eq:(cH)}) is violated.
\end{proposition}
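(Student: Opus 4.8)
The plan is to argue by contradiction, testing the given null combination against one of the biorthogonal functionals. So suppose that (\ref{eq:lambda}) holds but, contrary to the claim, condition (\ref{eq:(cH)}) is satisfied; that is, there is a family $\{f^{(k)}\}_{k\in\N}\subseteq Y^*$ with $\langle f^{(k)},Ae^{(j)}\rangle_{Y^*\times Y}=1$ for $k=j$ and $=0$ for $k\neq j$. Since $(\lambda_1,\lambda_2,\ldots)\neq 0$, fix an index $j_0\in\N$ with $\lambda_{j_0}\neq 0$.

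The key step is to apply the bounded linear functional $f^{(j_0)}$ to equation (\ref{eq:lambda}). Equation (\ref{eq:lambda}) means precisely that the partial sums $\sum_{k=1}^{N}\lambda_k Ae^{(k)}$ converge to $0$ in $Y$ as $N\to\infty$; since $f^{(j_0)}\in Y^*$ is continuous and linear, it may be moved inside the limit and inside the resulting finite sums, giving
\[
0=\Bigl\langle f^{(j_0)},\,\sum_{k\in\N}\lambda_k Ae^{(k)}\Bigr\rangle_{Y^*\times Y}=\sum_{k\in\N}\lambda_k\,\langle f^{(j_0)},Ae^{(k)}\rangle_{Y^*\times Y}.
\]
By (\ref{eq:(cH)}) every summand on the right vanishes except the one with $k=j_0$, which equals $\lambda_{j_0}$. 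Hence $\lambda_{j_0}=0$, contradicting the choice of $j_0$. This contradiction shows that (\ref{eq:(cH)}) must fail, as asserted.

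As for difficulty: there is essentially no obstacle here. The only place that needs a word of justification is the interchange of $f^{(j_0)}$ with the infinite series, and that is immediate from continuity of $f^{(j_0)}$ together with the convergence of the series hypothesized in (\ref{eq:lambda}). It may be worth remarking that the same computation proves a slightly stronger statement: whenever $Ae^{(j_0)}$ occurs with nonzero coefficient in a convergent null combination $\sum_{k\in\N}\lambda_k Ae^{(k)}=0$, no continuous functional on $Y$ can simultaneously evaluate to $1$ on $Ae^{(j_0)}$ and to $0$ on all the other $Ae^{(k)}$ --- which is exactly the mechanism behind the failure of (\ref{eq:(cH)}), and foreshadows why a weaker, non-biorthogonal set of assumptions is needed in what follows.
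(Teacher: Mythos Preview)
Your proof is correct and follows essentially the same line as the paper's: both argue by contradiction, pick an index with nonzero coefficient, and apply the corresponding biorthogonal functional to the null combination to force that coefficient to vanish. The only cosmetic differences are that the paper normalizes to $j_0=1$ and rewrites $Ae^{(1)}$ as a series in the remaining $Ae^{(j)}$ before pairing, whereas you pair directly with the original series and are more explicit about the continuity justification for passing $f^{(j_0)}$ inside the sum.
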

\begin{proof}
Without loss of generality let $Ae^{(1)}=\sum \limits_{j=2}^\infty \mu_j Ae^{(j)}$ and let there exist $f^{(1)}\in Y^*$ such that (\ref{eq:(cH)}) holds. Then it follows that
$$1=\langle f^{(1)},Ae^{(1)}\rangle_{Y^* \times Y}= \sum \limits_{j=2}^\infty \mu_j\langle f^{(1)},Ae^{(j)}\rangle_{Y^* \times Y}=\sum \limits_{j=2}^\infty 0 =0,$$
which yields a contradiction and proves the proposition.\end{proof}

\begin{remark} \label{rem:smooth}
As always if range conditions occur in the context of ill-posed problems, the requirement (\ref{eq:(c)}) characterizes a specific kind of smoothness. In our case, (\ref{eq:(c)}) refers to the \emph{smoothness of the basis elements} $e^{(k)}$. Precisely, since $\range(A)$ is not a closed subset of $Y$, as a conclusion of the Closed Range Theorem (cf., e.g.,~\cite{Yos80}) we have that the range $\range(A^*)$ is also not a closed subset of $\3$ or $c_0$, and
only a sufficiently smooth basis $\{e^{(k)}\}_{k \in \N}$ can satisfy Assumption~\ref{as:range} and hence Assumption~\ref{as:smooth}. If the $\ell^1$-regularization to equation (\ref{eq:Axy}) with infinite sequences $x=(x_1,x_2,...)$ is associated to elements $Lx:=\sum \limits_{k=1}^\infty x_k u^{(k)} \in \widetilde X$ with some \emph{synthesis operator} $L:\1 \to \widetilde X$ and some Schauder basis $\{u^{(k)}\}_{k \in \N}$ in a Banach space $\widetilde X$ (see, e.g.,~\cite[Section~2]{BFH13} and \cite{Grasm09}), Assumption~\ref{as:range} refers to the smoothness of the basis elements $u^{(k)} \in \widetilde X$. The paper \cite{AnzHofRam13} illustrates this
matter by means of various linear inverse problems with practical relevance in the context of Gelfand triples. On the other hand, Example~2.6 in \cite{BFH13} indicates that operators $A$ with \emph{diagonal structure} in general satisfy Assumption~\ref{as:range}.
However, the following example will show that already for a \emph{bidiagonal structure} this is not always the case.  
\end{remark}

We give an example of an injective operator $A$ where the basis is not smooth enough to satisfy Assumption~\ref{as:smooth}, because (\ref{eq:lambda}) is fulfilled and hence by Proposition~\ref{pro:not} condition (\ref{eq:(cH)}) is violated.

\begin{example}[bidiagonal operator] \label{ex:Hegland}
{\rm For this example we consider the bounded linear operator $A:\2 \to Y:=\2$
\begin{equation}\label{eq:exA}
[Ax]_k:=\frac{x_k-x_{k+1}}{k},\quad k=1,2,...,
\end{equation}
with a bidiagonal structure. This operator is evidently injective, moreover a Hilbert-Schmidt operator due to
$$Ae^{(1)}=e^{(1)};\quad Ae^{(k)}=\frac{e^{(k)}}{k}-\frac{e^{(k-1)}}{k-1},\;k=2,3,...;$$ $$\|A\|_{HS}:=\left(\sum \limits_{k=1}^\infty \|Ae^{(k)}\|_Y^2\right)^{1/2}\le \left(2 \sum \limits_{k=1}^\infty \frac{1}{k^2} \right)^{1/2} <\infty,$$
and therefore a compact operator. Its restriction to $X:=\1$ in the sense of equation (\ref{eq:Axy}) is also injective, bounded and even compact, because the embedding operator from $\1$ to $\2$ is injective and bounded. One immediately sees that with
\begin{equation} \label{eq:Hlambda}
\sum \limits_{k=1}^\infty Ae^{(k)} =0
\end{equation}
an equation (\ref{eq:lambda}) for $\lambda_k=1,\;k\in\N$ is fulfilled.
The adjoint operator $A^*:\2 \to \2$ has the explicit representation
\begin{equation} \label{eq:Astern}
[A^\ast\eta]_1=\eta_1\qquad\text{and}\qquad[A^\ast\eta]_k=\frac{\eta_k}{k}-\frac{\eta_{k-1}}{k-1},\quad k\geq 2,
\end{equation}
and the condition (\ref{eq:(c)}) cannot hold, which also follows from the general conclusion. To satisfy, for example, the range condition
$e^{(1)}=A^*\eta$  we find successively from (\ref{eq:Astern}) $\eta_1=1,\;\eta_2=2,\;...,\;\eta_k=k,\;...$, which violates the requirement $\eta \in \2$.
}\end{example}

\section{Convergence rates if the basis is not smooth enough}

If the basis is not smooth enough, as for example when Proposition~\ref{pro:not} applies, for proving convergence rates in $\ell^1$-regularization a weaker assumption has to be established that replaces the stronger
Assumption~\ref{as:range}. We will do this in the following.

\begin{definition}[collection of source sets]\label{df:s}
For prescribed $c\in[0,1)$ we say that a sequence $\left\{S^{(n)}(c)\right\}_{n\in\N}$ of subsets
$S^{(n)}(c)\subseteq(Y^\ast)^n$ is a \emph{collection of source sets} to $c$ if, for arbitrary $n \in \N$,  $S^{(n)}(c)$   contains all elements
$(f^{(n,1)},\ldots,f^{(n,n)})\in(Y^\ast)^n$ satisfying the following conditions:
\begin{itemize}
\item[(i)]
For each $k\in\{1,\ldots,n\}$ we have $[A^\ast f^{(n,k)}]_l=0$ for $l\in\{1,\ldots,n\}\setminus k$ and
$[A^\ast f^{(n,k)}]_k=1$.
\item[(ii)]
$\sum\limits_{k=1}^n\bigl\vert[A^\ast f^{(n,k)}]_l\bigr\vert\leq c$ for all $l>n$.
\end{itemize}
\end{definition}

The properties of the $A^\ast f^{(n,k)}$ in items (i) and (ii) of the definition are visualized in
Figure~\ref{fg:f}.

\begin{figure}[h]
$$
\begin{array}{ccccccccccccc}
A^\ast f^{(n,1)}&=&(&1&0&0&\ldots&0&0&\ast&\ast&\ldots&)\\
&&&&&&&&&+&+&&\\
A^\ast f^{(n,2)}&=&(&0&1&0&\ldots&0&0&\ast&\ast&\ldots&)\\
&&&&&&&&&+&+&&\\
\vdots&&&&&&&&&\vdots&\vdots&&\\
&&&&&&&&&+&+&&\\
A^\ast f^{(n,n)}&=&(&0&0&0&\ldots&0&1&\ast&\ast&\ldots&)\\
&&&&&&&&&\leq&\leq&&\\
&&&&&&&&&c&c&&
\end{array}
$$
\caption{\label{fg:f}Structure of the vectors $(A^\ast f^{(n,1)},\ldots,A^\ast f^{(n,n)})$ in Definition~\ref{df:s}.
The sums of the absolute values of the stars in each column have to be bounded by $c$.}
\end{figure}

One easily verifies that each single source set $S^{(n)}(c)$ is convex but not necessarily closed.
It is also not clear whether the source sets are nonempty.
Thus, we will claim in the sequel the following assumption.

\begin{assumption}\label{as:source}
For some $c\in[0,1)$ there exists a collection of source sets  $\left\{S^{(n)}(c)\right\}_{n\in\N}$ which contains
only nonempty sets $S^{(n)}(c)$.
\end{assumption}

Assumption~\ref{as:source} (with $c=0$) follows from Assumption~\ref{as:range}
by observing that $(f^{(1)},\ldots,f^{(n)})\in S^{(n)}(0)$.

The construction in Definition~\ref{df:s} may look a bit technical, but the elements $f^{(n,k)}$ define
a type of approximate inverse as we will now show.

\begin{remark}
Let the linear operator $F^{(n)}:Y\rightarrow\1$ be defined as
\begin{equation}
[F^{(n)}y]_k=\begin{cases}\la f^{(n,k)},y\ra_{Y^\ast\times Y},&k=1,\ldots,n,\\0,&k=n+1,\ldots.\end{cases}
\end{equation}
The composition
\begin{equation}
Q^{(n)}:=F^{(n)}A
\end{equation}
is then a bounded linear map from $\1$ into $\1$.
The range of $Q^{(n)}$ is the set of sequences $x$ with $x_k=0$ for $k>n$.
Furthermore, one has
$$\bigl(Q^{(n)}\bigr)^2=Q^{(n)}$$
and
$$\|Q^{(n)}-P^{(n)}\|_{\1\rightarrow\1}\leq c<1,$$
where $P^{(n)}$ is the canonical projection of $\1$ onto the set of sequences $x$ with $x_k=0$ for $k>n$.
From this it follows that $Q^{(n)}$ has the infinite matrix (cf.\ Figure~\ref{fg:f})
$$\begin{pmatrix}I_n&R_n\\0&0\end{pmatrix},$$
where $I_n$ is the $n$-dimensional identity and where $\|R_n\|_{\1\rightarrow\1}\leq c$.
\end{remark}

As already noted in Remark~\ref{rem:injective}, Assumption~\ref{as:range} implies the
injectivity of the operator $A$. The subsequent proposition shows that injectivity is also necessary for fulfilling Assumption~\ref{as:source}.

\begin{proposition} \label{pro:unique}
If Assumption~\ref{as:source} is satisfied, then $A$ is injective.
\end{proposition}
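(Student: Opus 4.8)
The plan is to read off from Assumption~\ref{as:source} a quantitative "diagonal dominance'' of the rows $A^\ast f^{(n,k)}$ and then let $n\to\infty$; equivalently one can run the argument through the near-projections $Q^{(n)}=F^{(n)}A$ of the preceding remark. Suppose $x\in\1$ satisfies $Ax=0$; the goal is to show $x=0$. Fix an arbitrary $n\in\N$ and, using Assumption~\ref{as:source}, pick a tuple $(f^{(n,1)},\ldots,f^{(n,n)})\in S^{(n)}(c)$. For each $k\in\{1,\ldots,n\}$ I would use the duality identity together with property~(i) of Definition~\ref{df:s},
\[
0=\la f^{(n,k)},Ax\ra_{Y^\ast\times Y}=\la A^\ast f^{(n,k)},x\ra_{\3\times\1}=\sum_{l=1}^\infty[A^\ast f^{(n,k)}]_l\,x_l=x_k+\sum_{l>n}[A^\ast f^{(n,k)}]_l\,x_l,
\]
so that $|x_k|\le\sum_{l>n}\bigl\vert[A^\ast f^{(n,k)}]_l\bigr\vert\,|x_l|$.

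Next I would sum this estimate over $k=1,\ldots,n$, interchange the (finite) summation over $k$ with the summation over $l>n$, and apply property~(ii):
\[
\sum_{k=1}^n|x_k|\le\sum_{l>n}|x_l|\sum_{k=1}^n\bigl\vert[A^\ast f^{(n,k)}]_l\bigr\vert\le c\sum_{l>n}|x_l|\le c\,\|x\|_\1 .
\]
Since this holds for every $n\in\N$ with one and the same $c\in[0,1)$, letting $n\to\infty$ gives $\|x\|_\1\le c\,\|x\|_\1$, whence $(1-c)\|x\|_\1\le 0$ and therefore $\|x\|_\1=0$, i.e.\ $x=0$. Hence $A$ is injective. (Alternatively, since $x\in\1$ the tail $\sum_{l>n}|x_l|$ tends to $0$, so the displayed inequality directly forces $\|x\|_\1\le 0$.)

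I do not expect a genuine obstacle here. The only points needing a line of justification are the pairing identity $\la f^{(n,k)},Ax\ra=\la A^\ast f^{(n,k)},x\ra$ and the absolute convergence of $\sum_l[A^\ast f^{(n,k)}]_l x_l$, both immediate from $A^\ast f^{(n,k)}\in\3$ and $x\in\1$, together with the interchange of the two sums, which is legitimate because the inner sum over $k$ has only finitely many terms and all summands are nonnegative. One may also phrase the whole computation via the operators introduced in the remark: $Ax=0$ forces $Q^{(n)}x=F^{(n)}Ax=0$, and the matrix representation of $Q^{(n)}$ (identity block $I_n$ in the upper left, remainder block $R_n$ with $\|R_n\|_{\1\to\1}\leq c$, zeros below) yields exactly $\sum_{k=1}^n|x_k|\leq c\sum_{l>n}|x_l|$, after which the same passage to the limit in $n$ completes the proof.
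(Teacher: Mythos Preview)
Your proof is correct and follows essentially the same route as the paper's: both use the identity $\la f^{(n,k)},Ax\ra=\la A^\ast f^{(n,k)},x\ra$ together with item~(i) of Definition~\ref{df:s} to isolate $x_k$ and item~(ii) to control the tail contribution, then let $n\to\infty$. The only cosmetic difference is that the paper fixes a single index $k$ and bounds $|x_k|\le c\sum_{l>n}|x_l|$ directly (so the vanishing of the tail already forces $x_k=0$ and, as the paper remarks, the hypothesis $c<1$ is not actually needed), whereas your main line sums over $k$ first and invokes $c<1$; your parenthetical alternative recovers precisely the paper's observation.
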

\begin{proof}
Assume $Ax=0$ for some $x\in\1$. By Assumption~\ref{as:source} there exists some $c\in[0,1)$ such that for each $n\in\N$ there
is an element $(f^{(n,1)},\ldots,f^{(n,n)})\in S^{(n)}(c)$, where $\left\{S^{(n)}(c)\right\}_{n\in\N}$
denotes the collection of source sets corresponding to $c$.
For fixed $k\in\N$ and all $n\geq k$ we have
\begin{align*}
\vert x_k\vert
&=\left\vert\la A^\ast f^{(n,k)},x\ra_{\3\times\1}-\sum_{l=n+1}^\infty[A^\ast f^{(n,k)}]_lx_l\right\vert\\
&\leq\vert\la f^{(n,k)},Ax\ra_{Y^\ast\times Y}\vert+\left(\sup_{l>n}\vert[A^\ast f^{(n,k)}]_l\vert\right)\sum_{l=n+1}^\infty\vert x_l\vert\\
&\leq c\sum_{l=n+1}^\infty\vert x_l\vert.
\end{align*}
The last expression goes to zero if $n$ tends to infinity. Thus, $x_k=0$ for arbitrary $k\in\N$ and therefore $x=0$.
Note that we did not need the bound $c<1$ to prove the proposition.
\end{proof}

\begin{remark}
Assumption~\ref{as:source} can be seen as an approximate variant of Assumption~\ref{as:range}.
If $\{S^{(n)}(c)\}_{n\in\N}$ is a collection of nonempty source sets and if $(f^{(n,1)},\ldots,f^{(n,n)})\in S^{(n)}(c)$,
then $A^\ast f^{(n,k)}$ converges weakly in $\3$ to $e^{(k)}$ for each $k$.
\end{remark}

For deducing convergence rates from Assumption~\ref{as:source} we use variational inequalities, which represent an up-to-date tool
for deriving convergence rates in Banach space regularization (cf., e.g., \cite{BoHo10,Flemmingbuch12,Grasm10,HKPS07,HofYam10,SKHK12})
even if no explicit source conditions or approximate source conditions are available.
Here our focus is on convergence rates of the form
\begin{equation} \label{eq:convrate}
\|x_{\alpha(\delta,y^\delta)}^\delta-\xdag\|_\1=\cO(\varphi(\delta))\quad\text{as $\delta\to 0$}
\end{equation}
for concave rate functions $\varphi$.

{\parindent0em {\bf Condition (VIE).}}
There is a constant $\beta\in(0,1]$ and a non-decreasing, concave, and continuous function
$\varphi:[0,\infty)\rightarrow[0,\infty)$ with $\varphi(0)=0$ such that a variational inequality
\begin{equation}\label{eq:vi}
\beta\|x-\xdag\|_\1\leq\|x\|_\1-\|\xdag\|_\1+\varphi(\|Ax-A\xdag\|_Y)
\end{equation}
holds for all $x\in\1$.

\begin{theorem}\label{th:vi}
Under Assumption~\ref{as:source} Condition (VIE) is satisfied.
More precisely, we have (VIE) for $\beta=\frac{1-c}{1+c}$ with $c$ from Assumption~\ref{as:source} and for the
concave function
\begin{equation}\label{eq:phi}
\varphi(t)=2\inf_{n\in\N}\left(\sum_{k=n+1}^\infty\vert\xdag_k\vert
+\frac{t}{1+c}\inf_{f^{(n,\bullet)}\in S^{(n)}(c)}\left(\sum_{k=1}^n\|f^{(n,k)}\|_{Y^\ast}\right)\right)
\end{equation}
for $t\geq 0$, where $S^{(n)}$ is defined in Definition~\ref{df:s}. As a consequence we have the corresponding convergence rate
(\ref{eq:convrate}) for that rate function $\varphi$ when the regularization parameter is chosen according to the discrepancy principle. 
\end{theorem}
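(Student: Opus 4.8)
The plan is to establish the variational inequality \eqref{eq:vi} directly from the source elements. Fix $x\in\1$, write $u:=x-\xdag$ and $t:=\|Ax-A\xdag\|_Y=\|Au\|_Y$, fix $n\in\N$ and, using Assumption~\ref{as:source}, pick a source element $(f^{(n,1)},\dots,f^{(n,n)})\in S^{(n)}(c)$. I would split $\|u\|_\1$ into the ``head'' $\sum_{k=1}^n|u_k|$ and the ``tail'' $\sum_{k>n}|u_k|$, bound each of the two pieces separately, then combine the bounds and pass to the infimum over the source element and over $n$.

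For the tail, the elementary estimates $|x_k|-|\xdag_k|\ge-|u_k|$ for $k\le n$ and $|x_k|-|\xdag_k|\ge|u_k|-2|\xdag_k|$ for $k>n$, summed over all $k\in\N$, give
\[
\sum_{k>n}|u_k|\le\bigl(\|x\|_\1-\|\xdag\|_\1\bigr)+\sum_{k=1}^n|u_k|+2\sum_{k>n}|\xdag_k| .
\]
For the head, property~(i) of Definition~\ref{df:s} together with the identity $\la A^\ast f^{(n,k)},u\ra_{\3\times\1}=\la f^{(n,k)},Au\ra_{Y^\ast\times Y}$ (a legitimate absolutely convergent rearrangement, since $A^\ast f^{(n,k)}\in\3$ and $u\in\1$) gives, for each $k\in\{1,\dots,n\}$,
\[
u_k=\la f^{(n,k)},Au\ra_{Y^\ast\times Y}-\sum_{l>n}[A^\ast f^{(n,k)}]_l\,u_l .
\]
Multiplying by $\operatorname{sign}(u_k)$, summing over $k=1,\dots,n$, bounding the first contribution by $\bigl(\sum_{k=1}^n\|f^{(n,k)}\|_{Y^\ast}\bigr)t$ and, using property~(ii) of Definition~\ref{df:s}, the second by $c\sum_{l>n}|u_l|$, yields
\[
\sum_{k=1}^n|u_k|\le\Bigl(\sum_{k=1}^n\|f^{(n,k)}\|_{Y^\ast}\Bigr)t+c\sum_{l>n}|u_l| .
\]

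Inserting the head bound into the tail bound and using $c<1$ to solve for the tail, then back-substituting to bound the head and adding the two resulting estimates, leads to
\[
\frac{1-c}{1+c}\,\|x-\xdag\|_\1\le\|x\|_\1-\|\xdag\|_\1+2\sum_{k>n}|\xdag_k|+\frac{2}{1+c}\Bigl(\sum_{k=1}^n\|f^{(n,k)}\|_{Y^\ast}\Bigr)t .
\]
Since $n\in\N$ and the source element were arbitrary, taking the infimum of the right-hand side first over $(f^{(n,1)},\dots,f^{(n,n)})\in S^{(n)}(c)$ and then over $n\in\N$ gives \eqref{eq:vi} with $\beta=\frac{1-c}{1+c}$ and exactly the function $\varphi$ of \eqref{eq:phi}. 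Finally, one verifies the qualitative properties of $\varphi$: for fixed $n$ the bracket in \eqref{eq:phi} is affine and nondecreasing in $t$ with finite nonnegative slope (finite since $S^{(n)}(c)\ne\emptyset$ consists of $n$-tuples in $Y^\ast$) and nonnegative intercept, so $\varphi$ is concave and nondecreasing on $[0,\infty)$; $\varphi(0)=2\inf_n\sum_{k>n}|\xdag_k|=0$ because $\xdag\in\1$; and $\varphi$ is continuous on $(0,\infty)$ as a finite concave function and at $0$ since, for every $n$, the bound $0\le\varphi(t)\le2\sum_{k>n}|\xdag_k|+C_n\,t$ forces $\limsup_{t\to0}\varphi(t)=0$. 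The convergence rate \eqref{eq:convrate} under the discrepancy principle then follows from the standard convergence-rate theory for variational inequalities (e.g.\ \cite{BoHo10,Flemmingbuch12}).

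I expect the combination step to be the main obstacle: the head and tail estimates have to be coupled and inverted carefully so that the sharp constant $\beta=\frac{1-c}{1+c}$ and the precise factor $\frac{2}{1+c}$ in \eqref{eq:phi} emerge, and it is exactly here --- unlike in Proposition~\ref{pro:unique} --- that the strict inequality $c<1$ is genuinely needed. A smaller technical point is justifying the duality-pairing identity and the interchange of summation and pairing in the tail-correction term of the head bound.
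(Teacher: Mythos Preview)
Your proof is correct and follows essentially the same route as the paper's: the same head/tail decomposition via $P_n$ and $Q_n$, the same source-element estimate for the head $\sum_{k\le n}|u_k|$ using items (i)--(ii) of Definition~\ref{df:s}, and the same triangle-inequality estimate for the tail. The only cosmetic difference is bookkeeping---the paper fixes $\beta=\frac{1-c}{1+c}$ at the outset and arranges for the tail terms to cancel, whereas you solve the coupled pair of head/tail inequalities for $H$ and $T$ and read off $\beta$ at the end; both arrive at the identical constants.
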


\begin{proof}
For $n\in\N$ define projections $P_n:\1\rightarrow\1$ by $[P_nx]_k:=x_k$ if $k\leq n$ and $[P_nx]_k=0$ else.
Further, set $Q_n:=I-P_n$. Then
\begin{align*}
\lefteqn{\beta\|x-\xdag\|_\1-\|x\|_\1+\|\xdag\|_\1}\\
&\qquad=\beta\|P_n(x-\xdag)\|_\1+\beta\|Q_n(x-\xdag)\|_\1-\|P_nx\|_\1-\|Q_nx\|_\1\\
&\qquad\qquad+\|P_n\xdag\|_\1+\|Q_n\xdag\|_\1.
\end{align*}
The triangle inequality yields $\|Q_n(x-\xdag)\|_\1\leq\|Q_nx\|_\1+\|Q_n\xdag\|_\1$ and
$\|P_n\xdag\|_\1\leq\|P_n(x-\xdag)\|_\1+\|P_nx\|_\1$
and therefore
\begin{align*}
\lefteqn{\beta\|x-\xdag\|_\1-\|x\|_\1+\|\xdag\|_\1}\\
&\qquad\leq(1+\beta)\|P_n(x-\xdag)\|_\1-(1-\beta)\|Q_nx\|_\1+(1+\beta)\|Q_n\xdag\|_\1\\
&\qquad=2\|Q_n\xdag\|_\1+(1+\beta)\|P_n(x-\xdag)\|_\1\\
&\qquad\qquad-(1-\beta)(\|Q_nx\|_\1+\|Q_n\xdag\|_\1).
\end{align*}
\par
Now choose $\beta=\frac{1-c}{1+c}$, which is equivalent to $c=\frac{1-\beta}{1+\beta}$,
and let $(f^{(n,1)},\ldots,f^{(n,n)})\in S^{(n)}(c)$ be arbitrary. Then
\begin{align*}
\lefteqn{\|P_n(x-\xdag)\|_\1}\\
&\qquad=\sum_{k=1}^n\left\vert\la A^\ast f^{(n,k)},x-\xdag\ra_{\3\times\1}-\sum_{l=n+1}^\infty[A^\ast f^{(n,k)}]_l(x_l-\xdag_l)\right\vert\\
&\qquad\leq\|Ax-A\xdag\|_Y\sum_{k=1}^n\|f^{(n,k)}\|_{Y^\ast}+\sum_{k=1}^n\sum_{l=n+1}^\infty\vert[A^\ast f^{(n,k)}]_l\vert\vert x_l-\xdag_l\vert
\end{align*}
and
\begin{align*}
\sum_{k=1}^n\sum_{l=n+1}^\infty\vert[A^\ast f^{(n,k)}]_l\vert\vert x_l-\xdag_l\vert
&=\sum_{l=n+1}^\infty\left(\sum_{k=1}^n\vert[A^\ast f^{(n,k)}]_l\vert\right)\vert x_l-\xdag_l\vert\\
&\leq c\|Q_n(x-\xdag)\|_\1\\
&\leq\frac{1-\beta}{1+\beta}(\|Q_nx\|_\1+\|Q_n\xdag\|_\1).
\end{align*}
Combining the estimates yields
\begin{align*}
\lefteqn{\beta\|x-\xdag\|_\1-\|x\|_\1+\|\xdag\|_\1}\\
&\qquad\leq 2\|Q_n\xdag\|_\1+(1+\beta)\|Ax-A\xdag\|_Y\sum_{k=1}^n\|f^{(n,k)}\|_{Y^\ast}\\
&\qquad=2\sum_{k=n+1}^\infty\vert\xdag_k\vert+\frac{2\|Ax-A\xdag\|_Y}{1+c}\sum_{k=1}^n\|f^{(n,k)}\|_{Y^\ast}
\end{align*}
for arbitrary $n\in\N$ and arbitrary $(f^{(n,1)},\ldots,f^{(n,n)})\in S^{(n)}(c)$.
The convergence rate result then immediately follows from Theorem~2 in \cite{HofMat12}.
\end{proof}

Now we are going to show that Theorem~\ref{th:vi} applies to the operator $A$ from Example~\ref{ex:Hegland} which does not
meet Assumption~\ref{as:range}. In this context, we see that even if a variational
inequality \eqref{eq:vi} holds for all $\beta\in(0,1)$, with possibly different functions $\varphi$, it does not
automatically hold for $\beta=1$.

\begin{proposition}
Let the operator $A$ be defined by Example~\ref{ex:Hegland} according to formula \eqref{eq:exA}. If there is a function $\varphi:[0,\infty)\rightarrow[0,\infty)$ with
$\lim \limits_{t\to 0}\varphi(t)=0$ such that Condition (VIE) with \eqref{eq:vi} holds for $\beta=1$, then we have $\xdag=0$.
\end{proposition}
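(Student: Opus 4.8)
The plan is to test the variational inequality \eqref{eq:vi} with $\beta=1$ against a family of perturbations of $\xdag$ that are large in $\1$-norm but nearly invisible after applying $A$; such perturbations exist precisely because of the relation \eqref{eq:Hlambda}, i.e.\ $\sum_{k=1}^\infty Ae^{(k)}=0$, which is the feature distinguishing the operator of Example~\ref{ex:Hegland}.

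Concretely, for $N\in\N$ I would introduce the finitely supported sequence $v_N:=\sum_{k=1}^N e^{(k)}=(1,\ldots,1,0,0,\ldots)\in\1$ with $N$ leading ones. The telescoping structure of $A$ in \eqref{eq:exA} gives $Av_N=\sum_{k=1}^N Ae^{(k)}=\tfrac1N e^{(N)}$, so $\|Av_N\|_Y=1/N$ while $\|v_N\|_\1=N$. Taking $x=x^{(N)}:=\xdag+v_N\in\1$ we have $\|Ax^{(N)}-A\xdag\|_Y=1/N$, $\|x^{(N)}-\xdag\|_\1=N$, and, since only the first $N$ components are altered, $\|x^{(N)}\|_\1-\|\xdag\|_\1=\sum_{k=1}^N\bigl(|\xdag_k+1|-|\xdag_k|\bigr)$.

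Plugging this into \eqref{eq:vi} with $\beta=1$ and rearranging yields
$$\sum_{k=1}^N\bigl(1+|\xdag_k|-|\xdag_k+1|\bigr)\leq\varphi(1/N).$$
Each summand is nonnegative by the triangle inequality, it vanishes whenever $\xdag_k\geq 0$, and a short case distinction shows it equals $2\min(1,|\xdag_k|)>0$ whenever $\xdag_k<0$. Letting $N\to\infty$ and using $\varphi(1/N)\to 0$ forces $\sum_{k:\,\xdag_k<0}\min(1,|\xdag_k|)=0$, hence $\xdag_k\geq 0$ for all $k$. Running the same argument with $x^{(N)}:=\xdag-v_N$ in place of $\xdag+v_N$ gives $\xdag_k\leq 0$ for all $k$, so $\xdag=0$.

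I do not expect a real obstacle; the only points needing care are the elementary lower bound $1+|\xdag_k|-|\xdag_k+1|\geq 2\min(1,|\xdag_k|)$ for $\xdag_k<0$ (together with its mirror image for the $-v_N$ test sequence) and the trivial but necessary check that $v_N$ and $\xdag\pm v_N$ genuinely lie in $\1$, so that \eqref{eq:vi} may legitimately be applied to them. The conceptual core is simply that \eqref{eq:Hlambda} produces directions of $\1$-norm $N$ whose image under $A$ has norm $1/N$, and a variational inequality with $\beta=1$ cannot accommodate such directions unless $\xdag$ already has minimal $\1$-norm in the strongest possible sense, which here means $\xdag=0$.
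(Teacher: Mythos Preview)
Your proof is correct and follows essentially the same approach as the paper's: both perturb $\xdag$ by a constant shift in the first $N$ coordinates (you use $\pm 1$, the paper uses $\mp\|\xdag\|_{\ell^\infty}$), exploit the telescoping structure of $A$ to get $\|A(x^{(N)}-\xdag)\|_Y=O(1/N)$, and then read off from \eqref{eq:vi} with $\beta=1$ that the components of one sign must vanish. The only cosmetic difference is that you run the two sign cases separately, whereas the paper handles one case and invokes a without-loss-of-generality.
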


\begin{proof}
Without loss of generality we assume that at least one component of $\xdag$ is positive.
For $n\in\N$ define $x^{(n)}\in\1$ by
$$x^{(n)}_k:=\begin{cases}
\xdag_k-\|\xdag\|_\3,&\text{if $k\leq n$},\\
\xdag_k,&\text{if $k>n$}
\end{cases}$$
for $k\in\N$ (if $\xdag$ has only nonpositive components, use plus instead of minus).
Then
\begin{align*}
\lefteqn{\|x^{(n)}-\xdag\|_\1-\|x^{(n)}\|_\1+\|\xdag\|_\1}\\
&=n\|\xdag\|_\3-\left(\sum_{k=1}^n\vert \xdag_k-\|\xdag\|_\3\vert
+\sum_{k=n+1}^\infty\vert \xdag_k\vert\right)+\|\xdag\|_\1\\
&=n\|\xdag\|_\3+\sum_{k=1}^n\vert \xdag_k\vert
-\sum_{\substack{k=1\\\xdag_k>0}}^n\left(\|\xdag\|_\3-\vert \xdag_k\vert\right)
-\sum_{\substack{k=1\\\xdag_k\leq 0}}^n\left(\|\xdag\|_\3+\vert \xdag_k\vert\right)\\
&=2\sum_{\substack{k=1\\\xdag_k>0}}^n\vert \xdag_k\vert
\end{align*}
and
$$\|Ax^{(n)}-A\xdag\|_Y=\frac{1}{n}\|\xdag\|_\3.$$
Thus, the variational inequality \eqref{eq:vi} implies
$$2\sum_{\substack{k=1\\\xdag_k>0}}^n\vert \xdag_k\vert\leq\frac{1}{n}\|\xdag\|_\3$$
for all $n\in\N$, which is a contradiction since the left-hand side is bounded away from zero but the right-hand
side approaches zero for large $n$.
\end{proof}

The next proposition together with Theorem~\ref{th:vi} shows that for each $\beta\in(0,1)$ a
variational inequality is valid and hence a corresponding convergence rate (\ref{eq:convrate}) can be established for $A$ from Example~\ref{ex:Hegland},
where the rate function $\varphi$ arises from properties of $A$ in combination with the decay rate of $|\xdag_k| \to 0$ as $k \to \infty$.

\begin{proposition}
For the operator $A$ from Example~\ref{ex:Hegland} the Assumption~\ref{as:source} holds for all $c\in(0,1)$.
\end{proposition}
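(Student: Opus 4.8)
The plan is to prove Assumption~\ref{as:source} by exhibiting, for each $c\in(0,1)$ and each $n\in\N$, an explicit element of $S^{(n)}(c)$. Here $Y=\2$, hence $Y^\ast=\2$, and by \eqref{eq:Astern} the adjoint acts through $[A^\ast\eta]_1=\eta_1$ and $[A^\ast\eta]_l=\tfrac{\eta_l}{l}-\tfrac{\eta_{l-1}}{l-1}$ for $l\geq 2$. The decisive observation is the telescoping identity
\begin{equation*}
\sum_{j=1}^l[A^\ast\eta]_j=\frac{\eta_l}{l},\qquad l\in\N,
\end{equation*}
so prescribing the sequence $A^\ast\eta$ is the same as prescribing its partial sums $\zeta_l:=\eta_l/l$, with $\eta$ recovered via $\eta_l=l\zeta_l$; moreover $\eta\in\2$ is automatic as soon as the $[A^\ast\eta]_l$, equivalently the $\zeta_l$, vanish for all large $l$.

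Guided by this, fix $c\in(0,1)$ and $n\in\N$, set $M:=\lceil n/c\rceil$, and for $k\in\{1,\ldots,n\}$ define the finitely supported sequence $f^{(n,k)}\in\2$ by
\begin{equation*}
\bigl[f^{(n,k)}\bigr]_l:=
\begin{cases}
0,&l<k,\\
l,&k\leq l\leq n,\\
l\bigl(1-\tfrac{l-n}{M}\bigr),&n<l\leq n+M,\\
0,&l>n+M.
\end{cases}
\end{equation*}
By the telescoping identity the partial sums are $\zeta_l^{(k)}=\bigl[f^{(n,k)}\bigr]_l/l$, that is, $\zeta_l^{(k)}=0$ for $l<k$, $\zeta_l^{(k)}=1$ for $k\leq l\leq n$, $\zeta_l^{(k)}=1-(l-n)/M$ for $n\leq l\leq n+M$, and $\zeta_l^{(k)}=0$ for $l>n+M$.

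It then remains to read off conditions (i) and (ii) of Definition~\ref{df:s} from the relation $[A^\ast f^{(n,k)}]_l=\zeta_l^{(k)}-\zeta_{l-1}^{(k)}$ (with $\zeta_0^{(k)}:=0$). For $l\in\{1,\ldots,n\}$ the consecutive values $\zeta_{l-1}^{(k)},\zeta_l^{(k)}$ are both $0$ if $l<k$, jump from $0$ to $1$ exactly when $l=k$, and are both $1$ if $k<l\leq n$, so $[A^\ast f^{(n,k)}]_l=\delta_{kl}$ on $\{1,\ldots,n\}$, which is (i). For $l>n$ the value $\zeta_l^{(k)}$ no longer depends on $k$, whence $[A^\ast f^{(n,k)}]_l$ is independent of $k$, equal to $-1/M$ for $n<l\leq n+M$ and to $0$ for $l>n+M$; therefore $\sum_{k=1}^n\bigl\vert[A^\ast f^{(n,k)}]_l\bigr\vert\leq n/M\leq c$ for every $l>n$, which is (ii). Hence $(f^{(n,1)},\ldots,f^{(n,n)})\in S^{(n)}(c)$, and since $n$ and $c$ were arbitrary, Assumption~\ref{as:source} holds for all $c\in(0,1)$.

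The argument is essentially computational, and the step I expect to be the only one needing care is the bookkeeping at the index boundaries, namely $l=1$ (the special first row of $A^\ast$) and $l=n$, $l=n+M$ (where the three pieces of $f^{(n,k)}$ meet); each of these is settled by direct substitution. I would also point out that $c>0$ is genuinely required, since $M$ must be finite in order to keep $f^{(n,k)}$ in $\2$; the borderline case $c=0$ is exactly Assumption~\ref{as:range}, which fails for this $A$ by Example~\ref{ex:Hegland} together with Proposition~\ref{pro:not}.
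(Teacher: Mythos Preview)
Your proof is correct. Both you and the paper exploit the telescoping identity $\sum_{j=1}^l[A^\ast\eta]_j=\eta_l/l$ (in the paper this appears implicitly as the formula $f^{(n,k)}_l=l\sum_{m=1}^l e^{(n,k)}_m$) to build finitely supported $f^{(n,k)}\in\2$, but the two constructions organize the tail $l>n$ differently. You give every $A^\ast f^{(n,k)}$ the \emph{same} tail: a linear ramp contributing $-1/M$ at each of the $M=\lceil n/c\rceil$ positions $n+1,\ldots,n+M$, so that the column sum in item (ii) is exactly $n/M\leq c$. The paper instead makes the tails \emph{disjoint}: $A^\ast f^{(n,k)}$ is supported on the arithmetic progression $k,n+k,2n+k,\ldots$, with values $-c$ (and one final $-b<c$) so that for each $l>n$ at most one summand in (ii) is nonzero and already bounded by $c$. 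Your construction is arguably a bit cleaner (one parameter $M$ instead of the pair $a=\lfloor 1/c\rfloor$, $b=1-ca$), whereas the paper's disjoint-support idea makes the verification of (ii) immediate without summation. Either way the argument is short and computational, and your boundary checks at $l=1$, $l=n$, $l=n+M$ are all fine.
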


\begin{proof}
Let $c\in(0,1)$ and set
$$a:=\left\lfloor\frac{1}{c}\right\rfloor\qquad\text{and}\qquad b:=1-ca.$$
Then $a\in\N$, $b\in[0,c)$, and $ca+b=1$.
For $n\in\N$ and $k\in\{1,\ldots,n\}$ define $e^{(n,k)}\in\3$ by
$$e^{(n,k)}_{ln+p}:=\begin{cases}
1,&\text{if $l=0$, $p=k$},\\
-c,&\text{if $l\in\{1,\ldots,a\}$, $p=k$},\\
-b,&\text{if $l=a+1$, $p=k$},\\
0,&\text{else}\\
\end{cases}$$
for $l\in\N_{0}$ and $p\in\{1,\ldots,n\}$.
Then $\sum_{l=1}^Ne^{(n,k)}_l=0$ for all $N>(a+2)n$.
Thus, the sequence $f^{(n,k)}$ defined by
$$f^{(n,k)}_l:=l\sum_{m=1}^le^{(n,k)}_m,\quad l\in\N,$$
belongs to $Y^\ast=\2$ and we have $e^{(n,k)}=A^\ast f^{(n,k)}$.
Item (i) in Definition~\ref{df:s} is obviously satisfied and item (ii)
can be easily deduced from the fact that for fixed
$n\in\N$ the elements $e^{(n,1)},\ldots,e^{(n,n)}$ have mutually disjoint supports.
Since for each $n\in\N$ we found $(f^{(n,1)},\ldots,f^{(n,n)})\in S^{(n)}$, the source sets
$S^{(n)}$ are nonempty.
\end{proof}

\section{Another example: integration operator and Haar wavelets}\label{sc:example}

In addition to Example~\ref{ex:Hegland} we now provide another, less artificial, example for a situation
where Assumption~\ref{as:source} is satisfied but Assumption~\ref{as:range} is violated.

For $\tilde{X}:=L^2(0,1)$ and $Y:=L^2(0,1)$ define $\tilde{A}:\tilde{X}\rightarrow Y$ by
\begin{equation}
(\tilde{A}\tilde{x})(s):=\int_0^s\tilde{x}(t)\diff t,\quad s\in(0,1).
\end{equation}
Then $\tilde{X}^\ast=L^2(0,1)$ and $Y^\ast=L^2(0,1)$, too, and
$\tilde{A}^\ast:Y^\ast\rightarrow\tilde{X}^\ast$ is given by
\begin{equation}
(\tilde{A}^\ast y)(t)=\int_{t}^1y(s)\diff s,\quad t\in(0,1).
\end{equation}

Suppose we know that the unknown solution to $\tilde{A}\tilde{x}=y$ is sparse or at least nearly sparse with respect to the Haar
basis and denote the synthesis operator of the Haar system by $L:\2\rightarrow L^2(0,1)$.
Then for given noisy data $y^\delta$ we would like to find approximate solutions $\tilde{x}_\alpha^\delta:=Lx_\alpha^\delta$,
where $x_\alpha^\delta\in\1$ is the minimizer of \eqref{eq:tikh2} with $A:=\tilde{A}L$.

To obtain convergence rates for that method we have to verify Assumption~\ref{as:range} or Assumption~\ref{as:source}.
But first let us recall the definition of the Haar basis.

The first element of the Haar system is given by $u^{(1)}(s):=1$ for $s\in(0,1)$. All other elements are scaled and
translated versions of the function
$$\psi(s):=\begin{cases}1,&s\in(0,\frac{1}{2}),\\-1,&s\in(\frac{1}{2},1).\end{cases}$$
More precisely
$$u^{(1+2^l+k)}(s):=\psi_{l,k}(s):=2^{\frac{l}{2}}\psi(2^ls-k),\quad s\in(0,1),$$
for $l=0,1,2\ldots$ and $k=0,1,\ldots,2^l-1$.

The following proposition shows that Assumption~\ref{as:range} is not satisfied in this setting, that is,
the basis $\{e^{(k)}\}_{k\in\N}$ in $\1$, and thus the Haar basis in $L^2(0,1)$, is not smooth enough with respect
to $A$ to obtain convergence rates via Assumption~\ref{as:range}.

\begin{proposition}
The element $e^{(1)}$ does not belong to $\range(A^\ast)$ and, thus, Assumption~\ref{as:range} does not hold.
\end{proposition}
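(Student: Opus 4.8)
The plan is to show directly that $e^{(1)} = A^\ast f$ is unsolvable in $Y^\ast = L^2(0,1)$, where $A = \tilde A L$. First I would use the composition structure: since $A^\ast = L^\ast \tilde A^\ast$, the equation $e^{(1)} = A^\ast f$ means $L^\ast(\tilde A^\ast f) = e^{(1)}$, i.e.\ the first Haar coefficient of $\tilde A^\ast f \in L^2(0,1)$ equals $1$ and all further Haar coefficients vanish. Because the Haar system is an orthonormal basis of $L^2(0,1)$, this forces $\tilde A^\ast f = u^{(1)} = \mathbf 1$, the constant function $1$ on $(0,1)$. So the whole question reduces to: is the constant function $\mathbf 1$ in the range of $\tilde A^\ast$?

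Next I would solve $\tilde A^\ast f = \mathbf 1$ explicitly. From the formula $(\tilde A^\ast f)(t) = \int_t^1 f(s)\,\diff s$, setting this equal to $1$ for all $t \in (0,1)$ and differentiating gives $-f(t) = 0$ almost everywhere, hence $f = 0$; but then $\tilde A^\ast f = 0 \neq \mathbf 1$, a contradiction. Equivalently, any $L^2$-function $f$ with $\int_t^1 f = 1$ for a.e.\ $t$ would have to be, after modification on a null set, absolutely continuous as a function of $t$ with derivative $0$, forcing it constant and then the integral condition fails at $t$ near $1$. Either way $\mathbf 1 \notin \range(\tilde A^\ast)$, so $e^{(1)} \notin \range(A^\ast)$.

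Finally I would conclude that Assumption~\ref{as:range} fails, since that assumption requires in particular $e^{(1)} = A^\ast f^{(1)}$ for some $f^{(1)} \in Y^\ast$. The main obstacle, and the only real content, is the reduction step: one must justify that $L^\ast g = e^{(1)}$ together with $g \in L^2(0,1)$ really does pin down $g = u^{(1)}$, which uses that $L$ is the synthesis operator of an orthonormal basis so that $L^\ast$ is the analysis (coefficient) operator and is injective with $L^\ast u^{(k)} = e^{(k)}$. Once that is in place the computation with $\tilde A^\ast$ is elementary. (A slicker variant avoids the Haar basis entirely: $e^{(1)} \in \range(A^\ast) = \range(L^\ast \tilde A^\ast) \subseteq \range(L^\ast)$, and applying $L$ shows $Le^{(1)} = u^{(1)}$ must lie in $\range(L L^\ast \tilde A^\ast) \subseteq \range(\tilde A^\ast)$ after identifying $L L^\ast = \mathrm{id}$ on $L^2(0,1)$; but spelling out the orthonormality seems cleanest.)
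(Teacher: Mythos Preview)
Your proof is correct and follows essentially the same route as the paper: reduce $e^{(1)}=A^\ast f$ via $A^\ast=L^\ast\tilde A^\ast$ and the orthonormality of the Haar system to $\tilde A^\ast f=\mathbf 1$, and then argue that the constant function $\mathbf 1$ cannot lie in $\range(\tilde A^\ast)$. The only cosmetic difference is in this last step: the paper observes that every element of $\range(\tilde A^\ast)$ belongs to $H^1(0,1)$ and vanishes at $t=1$, whereas you differentiate $\int_t^1 f=1$ to force $f=0$; both arguments are valid and amount to the same obstruction.
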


\begin{proof}
Assume that there is some $f^{(1)}\in Y^\ast$ such that $e^{(1)}=A^\ast f^{(1)}$.
Then $1=(\tilde{A}^\ast f^{(1)})(s)$ for all $s$, but elements from $\range(\tilde{A}^\ast)$
always belong to the Sobolev space $H^1(0,1)$ and are zero at $s=1$.
Thus, $1=(\tilde{A}^\ast f^{(1)})(1)$ cannot be true and $e^{(1)}=A^\ast f^{(1)}$ is not possible.
\end{proof}

The second proposition states that Assumption~\ref{as:source} is satisfied and thus convergence rates
can be obtained for our example.

\begin{proposition}
There is a nonempty collection of source sets $S^{(n)}(c)$ with $c=(4-\sqrt{8})^{-1}<1$.
For $n=1$ the element $f^{(1,1)}:=2$ belongs to $S^{(n)}(c)$.
For $n=2^m$ with $m\in\N$ the vector $(f^{(2^m,1)},\ldots,f^{(2^m,2^m)})$ given by
\begin{equation}
f^{(2^m,1+q)}:=-2^{\frac{m}{2}+2}\sum_{p=0}^{2^m-1}c^{(m)}_{1+q,1+p}\psi_{m,p},\quad q=0,\ldots,2^m-1,
\end{equation}
belongs to $S^{(n)}(c)$.
Here,
$$c^{(m)}_{1,1+p}=1$$
and
$$c^{(m)}_{1+2^r+s,1+p}:=\begin{cases}2^{\frac{r}{2}},&2^{m-r}s\leq p\leq 2^{m-r}(s+\frac{1}{2})-1,\\
-2^{\frac{r}{2}},&2^{m-r}(s+\frac{1}{2})\leq p\leq 2^{m-r}(s+1)-1\\
0,&\text{else},\end{cases}$$
for $r=0,\ldots,m-1$ and $s=0,\ldots,2^r-1$ and $p=0,\ldots,2^m-1$.
For all other values of $n$ an element contained in $S^{(n)}(c)$ can be obtained by truncating a vector
from $S^{(2^m)}(c)$ with $2^m\geq n$ after $n$ components.
\par
The sum in \eqref{eq:phi} can be estimated for $n\leq 2^m$ by
\begin{equation}
\sum_{k=1}^n\|f^{(n,k)}\|_{Y^\ast}\leq 2+2^{2m+2}-2^{m+2}
\end{equation}
where equality holds if $n=2^m$.
\end{proposition}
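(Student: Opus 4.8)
The plan is to verify conditions~(i) and~(ii) of Definition~\ref{df:s} directly for the stated vectors and then to compute the norms $\|f^{(n,k)}\|_{Y^\ast}$. The key preliminary is the factorisation $A=\tilde A L$, giving $A^\ast=L^\ast\tilde A^\ast$ with $L^\ast$ the Haar analysis operator, hence the working identity
\[
[A^\ast f]_k=\langle\tilde A^\ast f,u^{(k)}\rangle_{L^2}=\langle f,\tilde Au^{(k)}\rangle_{L^2},\qquad f\in Y^\ast=L^2(0,1).
\]
One records that $\tilde Au^{(1)}(s)=s$ and that for a wavelet index $u^{(l)}=\psi_{r,s}$ the function $\tilde A\psi_{r,s}$ is the ``tent'' supported on $[s2^{-r},(s+1)2^{-r}]$, rising linearly to the peak $2^{-r/2-1}$ at the midpoint, symmetric about it, with $\int_0^1\tilde A\psi_{r,s}=2^{-3r/2-2}$. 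These are the only analytic facts needed; the rest is bookkeeping with dyadic intervals.

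For part~(i) substitute $f^{(2^m,k)}=-2^{m/2+2}\sum_p c^{(m)}_{k,1+p}\psi_{m,p}$ and observe that for every $l\le 2^m$ the support $I_p:=[p2^{-m},(p+1)2^{-m}]$ of $\psi_{m,p}$ lies at the finest scale involved, so $\tilde Au^{(l)}$ is affine on $I_p$; using $\langle\psi_{m,p},1\rangle=0$ and $\langle\psi_{m,p},t\rangle=-2^{-3m/2-2}$ one gets $\langle\psi_{m,p},\tilde Au^{(l)}\rangle=-2^{-3m/2-2}c^{(m)}_{l,1+p}$, the slope of $\tilde Au^{(l)}$ on $I_p$ being precisely $c^{(m)}_{l,1+p}$. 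Hence $[A^\ast f^{(2^m,k)}]_l=2^{-m}\sum_p c^{(m)}_{k,1+p}c^{(m)}_{l,1+p}$, and the decisive point is that the vectors $(c^{(m)}_{k,1+p})_{p=0}^{2^m-1}$, $k=1,\dots,2^m$, rescaled by $2^{-m/2}$, are exactly the orthonormal discrete Haar basis of $\mathbb R^{2^m}$; so the sum equals $2^m\delta_{kl}$ and~(i) follows. For $n=1$, $f^{(1,1)}=2$ works because $\langle2,\tilde Au^{(1)}\rangle=\langle2,s\rangle=1$.

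For part~(ii) take $l>2^m$, i.e.\ $u^{(l)}=\psi_{r,s}$ with $r\ge m$; then $\mathrm{supp}\,\psi_{r,s}$ lies inside a single interval $I_{p^\ast}$, so only the $p^\ast$-term of $\sum_p\alpha_p\langle\psi_{m,p},\tilde A\psi_{r,s}\rangle$ survives. If $r=m$, $\psi_{m,p^\ast}$ changes sign at the midpoint about which $\tilde A\psi_{m,s}$ is symmetric, so the term vanishes; if $r\ge m+1$, $\psi_{m,p^\ast}$ is constant on $\mathrm{supp}\,\psi_{r,s}$, so the term is $\pm2^{m/2}\int\tilde A\psi_{r,s}=\pm2^{m/2}2^{-3r/2-2}$. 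Consequently
\[
\sum_{k=1}^{2^m}\bigl|[A^\ast f^{(2^m,k)}]_l\bigr|=2^{m-3r/2}\sum_{k=1}^{2^m}\bigl|c^{(m)}_{k,1+p^\ast}\bigr|,
\]
and the $\ell^1$-norm of a column of $(c^{(m)}_{k,1+p})$ equals $1+\sum_{j=0}^{m-1}2^{j/2}$, since a fixed column meets the support of exactly one scaled Haar vector at each scale $j=0,\dots,m-1$. As $r\ge m+1$, the right-hand side is maximal at $r=m+1$, where it is $2^{-(m+3)/2}\bigl(1+\sum_{j=0}^{m-1}2^{j/2}\bigr)$; I would then check that this quantity increases in $m$ and tends to $2^{-3/2}/(\sqrt2-1)=(4-\sqrt8)^{-1}$, so that~(ii) holds for all $m$ with the single constant $c=(4-\sqrt8)^{-1}<1$ (for $n=1$ the bound is $\tfrac12$, smaller still).

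Finally, for $n$ not a power of two, take $2^m\ge n$ and use the first $n$ entries of a vector from $S^{(2^m)}(c)$: (i) survives on the $n\times n$ block, and for $l>n$ either $l\le 2^m$, where each $[A^\ast f^{(2^m,k)}]_l$ with $k\le n<l$ already vanishes by~(i), or $l>2^m$, where the sum over $k\le n$ is dominated by the one over $k\le 2^m$ already bounded by $c$. For the norms one uses orthonormality and disjoint supports of $\{\psi_{m,p}\}$ to compute each $\|f^{(2^m,k)}\|_{Y^\ast}$; summing over $k$ and noting that truncation discards only nonnegative terms yields the asserted estimate $\sum_{k=1}^n\|f^{(n,k)}\|_{Y^\ast}\le 2+2^{2m+2}-2^{m+2}$ for $n\le 2^m$, with equality at $n=2^m$. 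I expect the main obstacle to be the constant: correctly reducing $\sum_k|c^{(m)}_{k,1+p^\ast}|$ to the partial geometric sum $1+\sum_{j=0}^{m-1}2^{j/2}$ and then confirming the monotone convergence $2^{-(m+3)/2}\bigl(1+\sum_{j=0}^{m-1}2^{j/2}\bigr)\uparrow(4-\sqrt8)^{-1}$, which is precisely what lets one $c<1$ serve every scale at once; orthonormality of the discrete Haar system makes~(i) routine, but one must watch the signs of the tent slopes so that $\langle\psi_{m,p},\tilde Au^{(l)}\rangle$ reproduces $-2^{-3m/2-2}c^{(m)}_{l,1+p}$ exactly.
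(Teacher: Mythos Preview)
Your proposal is correct and follows essentially the same route as the paper: compute the sequences $A^\ast f^{(n,k)}$, verify items~(i) and~(ii) of Definition~\ref{df:s}, and then bound the column sums $\sum_k\lvert[A^\ast f^{(2^m,k)}]_{1+2^l+k}\rvert$ by the partial geometric series $2^{m-\frac{3}{2}l}\bigl(1+\sum_{r=0}^{m-1}2^{r/2}\bigr)$, maximised at $l=m+1$ and tending to $(4-\sqrt{8})^{-1}$. The paper carries out exactly this last computation and otherwise declares the rest ``elementary but lengthy calculations''; your write-up supplies the conceptual scaffolding the paper omits, namely that item~(i) is precisely the orthonormality of the discrete Haar basis $\bigl(2^{-m/2}c^{(m)}_{k,1+p}\bigr)_p$ in $\mathbb R^{2^m}$, and that item~(ii) reduces to a single-term contribution via the nesting of dyadic supports and the even symmetry of the tent $\tilde A\psi_{r,s}$.
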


\begin{proof}
The proposition can be proven by elementary but lengthy calculations.
We only provide the elements $A^\ast f^{(n,k)}$ and the estimate for $c$.
\par
For $n=1$ we have
$$[A^\ast f^{(1,1)}]_1=1\quad\text{and}\quad
[A^\ast f^{(1,1)}]_{1+2^l+k}=2^{-\frac{3}{2}l-1},$$
where $l\in\N_0$ and $k=0,\ldots,2^l-1$.
\par
For $n=2^m$ with $m\in\N$ the element $A^\ast f^{(2^m,1+q)}$ has zeros in the first $2^{m+1}$ components
except for position $q$, where a one appears.
The absolute values of the remaining components are given by
$$\bigl\vert[A^\ast f^{(2^m,1)}]_{1+2^l+k}\bigr\vert=2^{m-\frac{3}{2}l}$$
and
$$\bigl\vert[A^\ast f^{(2^m,1+2^r+s)}]_{1+2^l+k}\bigr\vert
=\begin{cases}2^{\frac{1}{2}r+m-\frac{3}{2}l},&2^{-(l-r)}(k+1)-1\leq s\leq 2^{-(l-r)}\leq k,\\
0,&\text{else},\end{cases}$$
where $l>m$, $k=0,1,\ldots,2^l-1$, $r=0,1,\ldots,m-1$, $s=0,1,\ldots,2^r-1$.
\par
Now we come to the estimate of the constant $c$.
First, note that for fixed $m$, $r$, $l$, $k$ there is at most one $s$ such that $[A^\ast f^{(2^m,1+2^r+s)}]_{1+2^l+k}\neq 0$.
Then for fixed $m$, $l$, $k$ with $l>m$ we have
\begin{align*}
\sum_{\kappa=0}^{2^m-1}\bigl\vert[A^\ast f^{(2^m,\kappa)}]_{1+2^l+k}\bigr\vert
&=\bigl\vert[A^\ast f^{(2^m,1)}]_{1+2^l+k}\bigr\vert+\sum_{r=0}^{m-1}\sum_{s=0}^{2^r-1}\bigl\vert[A^\ast f^{(2^m,1+2^r+s)}]_{1+2^l+k}\bigr\vert\\
&\leq 2^{m-\frac{3}{2}l}+\sum_{r=0}^{m-1}2^{\frac{1}{2}r+m-\frac{3}{2}l}
=2^{m-\frac{3}{2}l}\left(1+\sum_{r=0}^{m-1}\bigl(\sqrt{2}\bigr)^r\right)\\
&=2^{m-\frac{3}{2}l}\left(1+\frac{\bigl(\sqrt{2}\bigr)^m-1}{\sqrt{2}-1}\right).
\end{align*}
Using $l\geq m+1$ we further estimate
\begin{align*}
\sum_{\kappa=0}^{2^m-1}\bigl\vert[A^\ast f^{(2^m,\kappa)}]_{1+2^l+k}\bigr\vert
&\leq 2^{-\frac{1}{2}m-\frac{3}{2}}\left(1+\frac{\bigl(\sqrt{2}\bigr)^m-1}{\sqrt{2}-1}\right)\\
&=2^{-\frac{3}{2}}\left(2^{-\frac{1}{2}m}+\frac{1-2^{-\frac{1}{2}m}}{\sqrt{2}-1}\right)\\
&=\frac{2^{-\frac{3}{2}}}{\sqrt{2}-1}\left(1-\bigl(2-\sqrt{2}\bigr)2^{-\frac{1}{2}m}\right)\\
&\leq\frac{2^{-\frac{3}{2}}}{\sqrt{2}-1}
=\frac{1}{4-\sqrt{8}}.
\end{align*}
\end{proof}

\section{Conclusions}

We have shown that the source condition $e^{(k)}\in\range(A^\ast)$ for all $k$, as published in \cite{BFH13}
for obtaining convergence rates in $\ell^1$-regularization, is rather strong.
A weaker assumption yields a comparable rate result and covers a wider field of settings.
Especially nonsmooth bases, e.g.\ the Haar basis, can be handled even if the forward operator is smoothing
and the basis elements do not belong to the range of the adjoint.

A major drawback of our new approach (and of the previous one in \cite{BFH13}) is that
the assumptions automatically imply injectivity of the operator.
Sufficient condition for convergence rates in $\ell^1$-regularization if the operator is not
injective and if the solution is not sparse are not known up to now. 

\section*{Acknowledgments}

The authors thank Bernd Hofmann for many valuable comments on a draft of this article and for fruitful discussions on the subject.
J.\ Flemming was supported by the German Science Foundation (DFG) under grant FL~832/1-1.
M.\ Hegland was partially supported by the Technische Universit\"at M\"unchen Institute of Advanced Study,
funded by the German Excellence Initiative.
Work on this article was partially conducted during a stay of M.\ Hegland at TU Chemnitz, supported by the German Science
Foundation (DFG) under grant HO~1454/8-1.

\end{document}